\newtheorem{theorem}{Theorem}
\newtheorem{Proposition}[theorem]{Proposition}
\newtheorem{Coro}[theorem]{Corollary}
\newtheorem{lemma}[theorem]{Lemma}
\newcommand{\IO}{\int_{\partial C_+^p}}
\newcommand{\ie}{{\it{i.$\,$e. }}}
\newcommand{\R}{\mathbb{R}}
\newcommand{\C}{\mathbb{C}}
\newcommand{\N}{\mathbb{N}}
\newcommand{\e}{\mathrm{e}}
\newcommand{\re}{\mathrm{Re}}
\newcommand{\im}{\mathrm{Im}}
\begin{document}
\title{Special values of Dirichlet series and zeta integrals}

\author{Eduardo Friedman}
\address{Departamento de Matem\'atica \\
 Universidad de Chile \\ Casilla 653, Santiago 1, Chile}
\email{friedman@uchile.cl}

\author{Aldo Pereira}
\address{Departamento de Matem\'atica \\
 Universidad de Chile \\ Casilla 653, Santiago 1, Chile}
\email{pereirasolis@gmail.com}

\thanks{This work was partially supported by  Chilean Fondecyt
grants 1085153 and 1040585 and
 Programa Iniciativa  Cient\'{\i}fica Milenio   grant ICM P07-027-F}
 \subjclass[2010]{11M41} \keywords{Dirichlet series; special values; zeta integrals}

\begin{abstract} For  $f$ and $g$ polynomials in $p$ variables,  we relate the
special  value at a non-positive integer $s=-N$,  obtained by
analytic continuation  of the Dirichlet series
$$
\zeta(s;f,g)=\sum_{k_1=0}^\infty \cdots \sum_{k_p=0}^\infty
g(k_1,\dots,k_p)f(k_1,\dots,k_p)^{-s}\ \,(\re(s)\gg0),
$$ to special
values of zeta integrals $$ Z(s;f,g)=\int_{x\in[0,\infty)^p}
g(x)f(x)^{-s}\,dx  \, \ (\re(s)\gg0).$$ We prove a simple relation
between  $\zeta(-N;f,g)$ and $Z(-N;f_a,g_a)$, where for $a\in\C ^p,\
f_a(x)$ is the shifted polynomial $f_a(x)=f(a+x)$.
 By direct calculation we prove the product rule for zeta integrals at $s=0$,
$
\mathrm{degree}(fh)\cdot Z(0;fh,g)=\mathrm{degree}(f)\cdot
Z(0;f,g)+\mathrm{degree}(h)\cdot Z(0;h,g),
$
and   deduce the corresponding rule for Dirichlet series at $s=0$,
$
\mathrm{degree}(fh)\cdot\zeta(0;fh,g)=\mathrm{degree}(f)
\cdot\zeta(0;f,g)+\mathrm{degree}(h)\cdot\zeta(0;h,g).
$
 This last formula generalizes work of Shintani and Chen-Eie.
\end{abstract}

\maketitle

\section{Introduction}

\noindent We shall deduce special values of
 Dirichlet series
\begin{equation}\label{DefZeta}
\zeta(s;f,g):=\sum_{k_1,\dots,k_p=0}^\infty
g(k_1,\dots,k_p)f(k_1,\dots,k_p)^{-s}\quad(\re(s)\gg0)
\end{equation}
from those of  zeta integrals
\begin{equation}\label{DefIntegrals}
 Z(s;f,g):=\int_{x_1=0}^\infty\cdots\int_{x_p=0}^\infty
g(x_1,\dots,x_p)f(x_1,\dots,x_p)^{-s}\,dx_p\cdots\, dx_1\, .
\end{equation}
Here $f$  and $g$ are polynomials  in $p$ variables with complex
coefficients, with some restrictions on  $f$   to ensure
 the existence of an appropriate branch of $\log f$ and the
  convergence and analytic continuation of sums and integrals.

The use of integrals to express sums goes back to Euler's invention
of the Euler-MacLaurin formula to compute
$\zeta(2)=\sum_{n=1}^\infty n^{-2}$ numerically \cite{16}. Later
authors, such as Mellin \cite{12}, Mahler \cite{11}, Shintani
\cite{15}, Cassou-Nogu\`es \cite{1}, Sargos \cite{14}, Lichtin
\cite{10}, Essouabri \cite{7}, Peter \cite{13} and de Crisenoy
\cite{5}, have used various integrals to ascertain the existence of
a meromorphic continuation of $\zeta(s;f,g)$ and to compute its
residues and various expansions. As the Euler-MacLaurin formula
already shows, at a general $s$ the connection between Dirichlet
series $\zeta(s;f,g)$  and zeta integrals $Z(s;f,g)$ is rather
complicated. We will show, however, that at non-positive integers
$s=0,-1,-2,\ldots$ the relationship    becomes quite simple.

 Consider, as a first easy case, the Riemann zeta function
$$
\zeta(s)=\zeta(s;x+1,1)=\sum_{k=0}^\infty
\frac{1}{(k+1)^s}\qquad\qquad(\re(s)>1)
$$
and its corresponding zeta integral (convergent for $\re(s)>1,\
\,\re(a)>-1$)
$$
Z(s;x+1+a,1)=\int_{0}^\infty
\frac{dx}{(x+1+a)^s}=\frac{(1+a)^{1-s}}{s-1}\,.
$$
Here we have allowed ourselves to replace the polynomial $f(x)=x+1$
by its shift $f_a(x)=f(x+a)$. Using the above meromorphic
continuation in $s$ for $Z(s;x+1+a,1)$  we find
$$
Z(-N;x+1+a,1)=
\frac{(1+a)^{N+1}}{-N-1}=-\frac1{N+1}\sum_{\ell=0}^{N+1}\binom{N+1}{\ell}
a^\ell,
$$
for $N\ge0$ a non-negative integer. If we mindlessly replace every
occurrence of $a^\ell$ above by the Bernoulli number $B_\ell$ we
obtain
$$
\frac{-1}{N+1}\sum_{\ell=0}^{N+1}\binom{N+1}{\ell}
B_\ell=-\frac{B_{N+1}(1)}{N+1}= (-1)^N \frac{B_{N+1}}{N+1}=\zeta(-N)
$$
\cite[p.\ 76]{4} \cite[pp.\ 67--68]{9}, where  the Bernoulli
polynomials $B_j(t)$ are defined by
\begin{equation}\label{BernoulliDef}
B_0(t)=1,\qquad \frac{dB_j}{dt}= jB_{j-1}(t),\qquad \int_0^1
B_j(t)\,dt=0 \qquad (j\ge1),
\end{equation}
and the Bernoulli numbers as $B_j=B_j(0)$.
 In short, to compute the
value of the Dirichlet series $\zeta(s)=\zeta(s;x+1,1)$ at $s=-N$,
we simply take the polynomial (in $a$) giving the shifted zeta
integral $Z(-N;x+1+a,1)$ and replace powers of $a$ by Bernoulli
numbers.

This simple relation between Dirichlet series and shifted zeta
integrals holds quite generally, as we shall now describe. For
$h\in\C[x]=\C[x_1,\dots,x_p]$ and
 $a=(a_1,\dots,a_p)\in\R^p$, let
 $h_a(x):=h(x+a)$.\footnote{\ We use this notation only when the subscript is the letter $a$.
  For example, $f_j$ in Theorem \ref{MainTheorem} below simply stands for one of $n$ polynomials.}
   In \S2 we  prove (under some hypothesis on $f$) that
the maps  $a\to Z(-N;f_a,g_a)$ and $a\to \zeta(-N;f_a,g_a)$ are
 polynomials in $a$. Here $a$ ranges in a small
enough ball in $\R^p$ containing the origin, while $N$, $f$  and $g$
are fixed.  To express the relation between the two polynomials (in
$a$) $Z(-N;f_a,g_a)$ and $\zeta(-N;f_a,g_a)$, write the former as a
(finite!) sum of monomials
\begin{equation}\nonumber
Z(-N;f_a,g_a)=\sum_{L} c_L a^L \qquad\qquad\qquad\qquad
\Big(a^L:=\prod_{i=1}^p a_i^{L_i},\ \, c_L\in\C\Big).
\end{equation}
In \S2, Proposition \ref{RaabeMainP},  we prove that
\begin{equation}\label{InverseRaabe2}
\qquad \ \ \ \zeta(-N;f_a,g_a)=\sum_{L} c_L B_L(a) \ \ \ \qquad
\Big( B_L(a):=\prod_{i=1}^p B_{L_i}(a_i)\Big).
\end{equation}
 Taking
$a=0$ we obtain the special value of the Dirichlet series
\begin{equation}\label{EulerMacLaurinAtN}
\zeta(-N;f,g)=\sum_{L} c_L B_L \qquad\qquad\qquad\qquad
\Big(B_L:=\prod_{i=1}^p B_{L_i}\Big)
\end{equation}
in terms of   special values of zeta integrals and products of
Bernoulli numbers.  Equation \eqref{InverseRaabe2} explains the
profusion   of Bernoulli polynomials  in  Shintani's formulas
\cite{15}.

 Equation \eqref{InverseRaabe2} follows rather formally from the
 ``Raabe formula"\footnote{\ Raabe's 1843
formula is
$\int_0^1\log\!\big(\Gamma(x+t)/\sqrt{2\pi}\big)\,dt=x\log x.$ See
\cite[p.\ 367]{8} for the connection to \eqref{Raabe}. A $p$-adic
version of   Raabe's formula   was given in \cite{3}.} (see
Proposition \ref{RaabeMainP})
\begin{equation}\label{Raabe}
Z(s;f_a,g_a)=\int_{t\in[0,1]^p} \zeta(s;f_{a+t},g_{a+t})\,dt.
\end{equation}
This formula, though easily proved   by an ``unfolding" argument,
provides a powerful  link between zeta integrals and Dirichlet
series. The Raabe formula \eqref{Raabe} holds everywhere in $s\
\big($save at the poles of $Z(s;f,g)\big)$, but it can be inverted
at special values $s=-N$ to yield $\zeta(-N;f_a,g_a)$ in terms of
$Z(-N;f_a,g_a)$. Quite generally (see Lemma \ref{LemmaB}), two
polynomials $Q(a)$ and $P(a)=\sum_{L} d_L a^L$ in $p$ variables
  are linked by a Raabe formula
$$
\ \ \ \  \ \ \ P(a)=\int_{t\in[0,1]^p} Q(a+t)\,dt
$$
if and only if
$$
Q(a)=\sum_{L} d_L B_L(a).
$$

Our main motivation for relating special values of zeta integrals
and Dirichlet series is  that  integrals are usually easier to
handle. In \S3 we   give (under some hypothesis on $f$)  a slightly
complicated formula for  $Z(s;f,g)$, for $s$ a non-positive integer.
For $s=0$ we are able to simplify it enough to prove a formula for
the special value $Z\big(0;\prod_{j=1}^n f_j,g\big)$ in terms of the
individual $Z(0;f_j,g)$. In view of the relation between zeta
integrals and Dirichlet series at special values, we deduce an
analogous formula giving $ \zeta\big(0;\prod_{j=1}^n f_j,g\big)$ in
terms of the individual $\zeta(0;f_j,g)$.

 A   first case  of this formula was
proved by Shintani. Namely, if all the $f_j(x)$ are polynomials of
degree one, positive for all $x\in[0,\infty)^p$, Shintani \cite[p.\
206]{15} \cite[p.\ 386]{8} showed
\begin{equation}\label{Shintani}
\zeta\big(0;\prod_{j=1}^n
f_j,1\big)=\frac1n\sum_{j=1}^n\zeta(0;f_j,1).
\end{equation}
Shintani's formula \eqref{Shintani} cannot be expected to generalize
literally to higher-degree polynomials.\footnote{\ To see this,
assume \eqref{Shintani} and consider two ways to associate
$\prod_{j=1}^3 f_j$,
$$
    \zeta\big(0;f_1 (f_2 f_3),g\big) = \frac{\zeta(0;f_1, g)}{2}
      + \frac{\zeta(0;f_2 f_3,g)}{2}
    = \frac{\zeta(0;f_1, g)}{2}    + \frac{\zeta(0;f_2,g)}{4}
      + \frac{\zeta(0;f_3, g)}{4} ,
$$
  $$
    \zeta\big(0;(f_1 f_2) f_3,  g \big) = \frac{\zeta(0;f_1 f_2, g)}{2}
      + \frac{\zeta(0;f_3, g)}{2}
   = \frac{ \zeta(0;f_1,g)}{4}  + \frac{\zeta(0;f_2, g)}{4}
      + \frac{\zeta(0;f_3,g)}{2} .
 $$
 On subtracting, we find $\zeta(0;f_1, g) = \zeta(0;f_3, g)$.
  This would imply that $\zeta(0;f, g)$ does not depend on
  $f$, contradicting a host of known facts, e.g. \cite[Lemma 2]{15}.}
 Besides correcting  for the degree of the $f_j$, we need some kind of
  irreducibility condition on the $f_j$  not allowing them to factor
  into a product of polynomials in separate variables. Indeed,
  if we had
\begin{equation} \label{Decompose}
f(x_1,\dots,x_p) = h(x_1, \ldots, x_\ell) \cdot
\tilde{h}(x_{\ell+1}, \ldots, x_p)
\end{equation}
 for some $1\le\ell<p$, then from  \eqref{DefZeta}  we would find a
 corresponding factorization
  \begin{eqnarray} \label{pumpum}
    \zeta(s; f, 1) =\zeta(s; h, 1)\cdot \zeta(s;\tilde{h},1).
  \end{eqnarray}
This kind of relation (when applied by analytic continuation at
$s=0$) is inconsistent with simple generalizations of Shintani's
formula \eqref{Shintani}.\footnote{\ More precisely, it is
inconsistent with generalizations of the form
\begin{equation*}
   \zeta(0; f_1f_2\cdots f_n, 1) =  c_1\zeta(0; f_1, 1)+c_2\zeta(0; f_2, 1)+\cdots+c_n\zeta(0; f_n, 1),
  \end{equation*}
where the $c_\ell$ depend at most on the degrees of the $f_j$'s
$\,(1\le j\le n)$.}

Mahler \cite[p.\ 385]{11} gave a simple hypothesis on the polynomial
$f$ ensuring that it does not  separate  as in \eqref{Decompose}.
\vskip.25cm \noindent{\bf{Mahler's Hypothesis.}}   The polynomial
$f(x) \in \C[x_1, \ldots,
    x_p]$ is non-constant and does not
    vanish anywhere in the closed ``octant" $[0,\infty)^p$.
     Moreover, its top-degree homogeneous
    part $f_\mathrm{top}(x)\not=0$ for  $x\in[0,\infty)^p$,
    save at $x=(0,0,\dots,0)$.
\vskip.25cm \noindent Under his hypothesis Mahler \cite{11} showed
that $Z(s; f, g) $ and $\zeta(s;f,g)$ converge for $\re(s)\gg0$,
extend meromorphically in $s$ to all of $\C$ and are regular
 at the
non-positive integers $s=0,-1,-2,\dots\ $.\footnote{\  The branch of
$\log f$ used by Mahler in defining $Z(s;f,g)$ and $\zeta(s;f,g)$ is
just any one that is continuous on $[0,\infty)^p$. It exists
precisely because of the non-vanishing of $f$  that he assumed. The
value of $Z(s;f,g)$ and $\zeta(s;f,g)$ at integer values of $s$
proves to be independent of the choice of (continuous) branch of
$\log f$.} An advantage of Mahler's Hypothesis for our purposes is
that if $f$ and $h$ satisfy it, then
 so does $fh$.

\begin{theorem}\label{MainTheorem}
Let $g$ and $f_j\ (1\le j\le n)$
 be  polynomials in $p$ variables and assume that
all the     $f_j$  verify Mahler's Hypothesis above. Then the values
at $s=0$ obtained by analytic continuation of the Dirichlet series
$$
\zeta(s;f_j,g):=\sum_{k_1=0}^\infty \cdots \sum_{k_p=0}^\infty
g(k_1,\dots,k_p)f_j(k_1,\dots,k_p)^{-s}\qquad(\re(s)\gg0)
$$
satisfy the product rule
 \begin{equation}\label{SeriesFormula}
 \mathrm{degree}\bigg(\! \prod_{j=1}^n
f_j\bigg)\cdot\zeta\big(0;\prod_{j=1}^n f_j,g\big)=\sum_{j=1}^n
\mathrm{degree}\hskip.05cm(f_j)\cdot\zeta(0;f_j,g).
\end{equation}
 \end{theorem}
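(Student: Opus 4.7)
The plan is to reduce the theorem to the analogous product rule for the zeta integrals,
$$
\mathrm{degree}\bigg(\prod_{j=1}^n f_j\bigg)\cdot Z\big(0;\prod_{j=1}^n f_j,g\big)=\sum_{j=1}^n \mathrm{degree}(f_j)\cdot Z(0;f_j,g),
$$
and then to transfer this identity to the Dirichlet series using the relation \eqref{EulerMacLaurinAtN} between $\zeta(0;\cdot,g)$ and the polynomial (in $a$) given by $Z(0;\cdot_a,g_a)$.

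First I would establish the integral version for two polynomials, namely
$\mathrm{degree}(fh)\cdot Z(0;fh,g)=\mathrm{degree}(f)\cdot Z(0;f,g)+\mathrm{degree}(h)\cdot Z(0;h,g)$,
by direct calculation. Both sides are well defined since Mahler's Hypothesis is stable under products. I expect the argument to hinge on an explicit formula for $Z(s;f,g)$ at $s=0$ obtained in \S3 by iterated integration by parts (or Mellin-type manipulations), in which the degree factor emerges through Euler's relation applied to the top-degree part $f_{\mathrm{top}}$. Given the two-polynomial case, the identity for $n$ polynomials follows by a routine induction, grouping $\prod_{j=1}^{n-1}f_j$ with $f_n$ at each step.

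Once the integral identity is available, I would replace each $f_j$ by its shift $(f_j)_a$ and $g$ by $g_a$. Because $(fh)_a=f_a h_a$ and $\mathrm{degree}(f_a)=\mathrm{degree}(f)$, the product rule yields an identity of polynomials in $a\in\R^p$ (valid on a small ball around the origin, hence everywhere as polynomials):
$$
\mathrm{degree}(F)\cdot Z(0;F_a,g_a)=\sum_{j=1}^n \mathrm{degree}(f_j)\cdot Z(0;(f_j)_a,g_a), \qquad F:=\prod_{j=1}^n f_j.
$$
Expanding each side in the monomial basis $\{a^L\}$ and applying the linear Bernoulli substitution $a^L\mapsto B_L$ of \eqref{EulerMacLaurinAtN} (with $N=0$) coefficient by coefficient, the left-hand side becomes $\mathrm{degree}(F)\cdot\zeta(0;F,g)$ and the right-hand side becomes $\sum_j\mathrm{degree}(f_j)\cdot\zeta(0;f_j,g)$, which is exactly \eqref{SeriesFormula}.

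The main obstacle is the first step: producing a formula for $Z(0;f,g)$ concrete enough to make the degree factor appear naturally. Whatever formula \S3 provides is unlikely to be manifestly symmetric between $f$ and $h$, so some genuine manipulation should be required to pair the three terms and recognize $\mathrm{degree}(fh)=\mathrm{degree}(f)+\mathrm{degree}(h)$ as the correct combinatorial weight. Once that is settled, the passage to Dirichlet series via the Bernoulli substitution is essentially formal and needs no further analysis beyond what \eqref{EulerMacLaurinAtN} already supplies.
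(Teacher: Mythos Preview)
Your plan is correct and matches the paper's own proof essentially step for step: prove the product rule for $Z(0;\cdot,g)$ first (reducing to $n=2$), then shift by $a$ and apply the Bernoulli substitution \eqref{EulerMacLaurinAtN} coefficientwise to pass to $\zeta(0;\cdot,g)$. The one place your speculation diverges is the mechanism in \S3: the degree factor does not come from Euler's relation but from the identity (Corollary~\ref{Coro1}) that $\deg(f)\cdot Z(0;f,g)$ equals the $\rho^{-p}$-coefficient of $-\int_{\partial C_+^p} g(\rho\sigma)\log\bigl(f/f_{\mathrm{top}}(\rho\sigma)\bigr)\,d\sigma$, after which the two-factor product rule is immediate from additivity of the logarithm and $(f_1f_2)_{\mathrm{top}}=(f_1)_{\mathrm{top}}(f_2)_{\mathrm{top}}$.
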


\begin{Coro} $\mathrm{(Chen}\text{-}\mathrm{Eie)}$
 Assume furthermore that
the  polynomials    $f_j$  all have the same degree. Then
\begin{equation*}
 \zeta\big(0;\prod_{j=1}^n f_j,g\big)=\frac1n\sum_{j=1}^n
 \zeta(0;f_j,g).\nonumber
\end{equation*}
\end{Coro}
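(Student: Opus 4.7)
The plan is to deduce the corollary directly from Theorem \ref{MainTheorem} by specializing to the equal-degree case. The key observation is that the product rule \eqref{SeriesFormula} is homogeneous of degree one in the degrees of the $f_j$, so when all degrees coincide they can be cancelled.

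More concretely, I would set $d:=\mathrm{degree}(f_j)$, which by hypothesis is independent of $j$. Since each $f_j$ satisfies Mahler's Hypothesis and the class of polynomials satisfying this hypothesis is closed under multiplication (as noted in the paragraph preceding the theorem), the product $\prod_{j=1}^n f_j$ also satisfies Mahler's Hypothesis, and its degree is $nd$. Substituting these values into \eqref{SeriesFormula} yields
\begin{equation*}
nd\cdot\zeta\Big(0;\prod_{j=1}^n f_j,g\Big)=\sum_{j=1}^n d\cdot\zeta(0;f_j,g)=d\sum_{j=1}^n\zeta(0;f_j,g).
\end{equation*}
Dividing both sides by $nd$ (which is nonzero, as Mahler's Hypothesis forces each $f_j$ to be non-constant, hence $d\ge1$) gives the claimed identity.

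There is essentially no obstacle here beyond verifying that the quantity $nd$ one divides by is nonzero, which is immediate from Mahler's Hypothesis. The content of the corollary is entirely contained in Theorem \ref{MainTheorem}; the corollary simply records the form the product rule takes in the historically important equal-degree situation studied by Chen and Eie.
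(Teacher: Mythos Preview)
Your proposal is correct and matches the paper's approach exactly: the corollary is stated immediately after Theorem \ref{MainTheorem} with no separate proof, since it follows at once from \eqref{SeriesFormula} by setting all $\deg(f_j)=d$ and dividing through by $nd$. Your observation that $d\ge1$ (from the non-constancy clause in Mahler's Hypothesis) correctly justifies the division.
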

\noindent Chen and Eie \cite[p.\ 3219]{2} do not explicitly mention
Mahler's Hypothesis, nor the condition of equal degree for the
$f_j$, but we have seen above that  assumptions of this kind are
unavoidable.\footnote{\  Chen and Eie  take their sums in
\eqref{DefZeta} for $k_i\ge1$ instead of $k_i\ge0$, but this is just
a matter of replacing $f(x)=f(x_1,\ldots,x_p)$  by
$f(x_1+1,\ldots,x_p+1)$,
  and similarly with $g$.}

\section{Dirichlet series and zeta integrals at special values}
 Recall from \S1 that a
polynomial $f\in\C[x]$ in $p$ variables
  satisfies Mahler's Hypothesis if
$m:=\deg(f)>0$, $\,f(x)\not=0$ for all
$x\in\R^p_{\ge0}:=[0,\infty)^p$, and if its top-degree homogeneous
part $f_\mathrm{top}$  satisfies $f_\mathrm{top}(x)\not=0$ for all
$x\in\R^p_{\ge0}-\big\{0\big\}$. We let
$\mathcal{M}=\mathcal{M}_{m,p}$ denote the set of all such $f$.
Certainly $\mathcal{M}$ is non-empty, as it contains the polynomial
$x_1^m+x_2^m+\cdots+x_p^m+1$.

 Together with $f\in\mathcal{M}$, we
will need to consider the shifted polynomial $f_a$ defined as
$f_a(x):=f(x+a)$, where $a\in\C^p$. Since
$\big(f_a\big)_\mathrm{top}=f_\mathrm{top}$, it is clear that for
$a\in \R^p_{\ge0}$, $f_a\in\mathcal{M}$ if $f\in\mathcal{M}$. We
will now show that $f_a\in\mathcal{M}$  for all $a$ in a small
enough neighborhood of the origin in $\C^p$. For this it suffices to
show that $\mathcal{M}$ is open in the finite-dimensional complex
vector space of all polynomials of degree $m$ in $p$ variables
(space of coefficients).

To show that $\mathcal{M}$ is open we first estimate $|f(x)|$ for
$x\in\R^p_{\ge0}$. It proves convenient to switch away from
cartesian coordinates $x=(x_1,x_2,\dots,x_p)$. Instead of the
well-known spherical co-ordinates  used by Mahler \cite{11} for this
purpose, we will use ``cubical" co-ordinates $(\rho,\sigma)$,
\begin{equation}\label{Cubical}
\rho=\rho(x):=\max(|x_1|,|x_2|,\dots,|x_p|),\quad
\sigma=\sigma(x):=\frac{x}{\rho(x)}\quad(x\not=0).
\end{equation}
We denote by $\partial C_+^p$ the piece of boundary of the
unit-hypercube $C^p=[0,1]^p$ where at least one co-ordinate is 1,
\begin{equation}\label{Hypercube}
\partial C_+^p:=\big\{x\in\R^p_{\ge0} \big|\  \rho(x)=1   \big\}.
\end{equation}
 For $f\in\mathcal{M}$
and $x\in\R^p_{\ge0},\ x\not=0,$  write
\begin{align}\nonumber
r(x)&=r_f(x):=\frac{f(x)-f_\mathrm{top}(x)}{f_\mathrm{top}(x)} , \\
\label{Factorf} f(x) &= f_\mathrm{top}(x)\big(1+r(x)\big)=\rho^m
f_\mathrm{top}(\sigma) \big(1+r(\rho\sigma)\big)
\qquad\qquad(x\not=0)  .
\end{align}
Note that
\begin{equation}\label{Writer}
 r(\rho\sigma)=r_f(\rho\sigma)=\frac{f_{(m-1)}(\sigma)}{\rho f_\mathrm{top}(\sigma)}+
\frac{f_{(m-2)}(\sigma)}{\rho^2
f_\mathrm{top}(\sigma)}+\cdots+\frac{f_{(1)}(\sigma)}{\rho^{m-1}
f_\mathrm{top}(\sigma)}+ \frac{f_{(0)}}{\rho^m
f_\mathrm{top}(\sigma)},
\end{equation}
where $f_{(j)}$ denotes the homogeneous part of $f$ of degree $j$.
Hence for all $\sigma\in \partial C_+^p$ and all $\rho>\rho_f$ (for
some large enough $\rho_f$) we have $|r_f(\rho\sigma)|<\frac12$
(say). Similarly, for all polynomials $\tilde f$ in a neighborhood
of  $f$ we   have $1+r_{\tilde f}(\rho\sigma)\not=0$ for
$\rho>\rho_f$. Considering the factorization \eqref{Factorf}, we see
that non-vanishing conditions on compact sets  insure Mahler's
Hypothesis for $\tilde f$. Thus $\mathcal{M}$ is
  open.\footnote{\ Here is a proof that the complement of such a set is closed. For $K\subset\C^p$ compact  let
  $$
W=W_{K,m}:=\Big\{f\in\C[x_1,...,x_p]\big|\ \deg(f)\le m,\ \exists
k\in K,\ f(k) =0\Big\}.
  $$
  It suffices to show that if $\{f_n\} \in W$ converges to $f$ (say, in the uniform norm),
  then $f\in W$. Let $f_n(k_n)=0$, with $k_n\in K$. Since $K$ is
  compact, there exists a subsequence $k_{n_j}$ converging to $k\in
  K$. But then $0=\lim_j f_{n_j}(k_{n_j})=f(k).$ Thus $f\in W$.}

Since $\R^p_{\ge0}$ is simply  connected and $f\in \mathcal{M}$ does
not vanish there, we can choose a continuous branch   $\log
f:\R^p_{\ge0}\to\C$. By the same token, locally around a given $f$
we can choose this branch so that it depends analytically on  the
coefficients of  $f$. Any other continuous choice of $\log f$ will
differ by    $2\pi i \ell$ for some fixed integer $\ell$,
introducing a factor of $\e^{-2\pi i \ell s}$   in our zeta
integrals \eqref{DefIntegrals} and series \eqref{DefZeta}. Hence
their values (or residues) at any integer $s$ are independent of the
branch chosen.

 Following Mahler \cite{11} and considering \eqref{Factorf},  we choose our branch of $\log f$ so that
 for $x\not=0,\ x\in\R_{\ge0}^p$ and $ f\in
\mathcal{M} $,
\begin{equation}\label{Branch}
\log f(x)= \log f(\rho\sigma)= m\log \rho+\log
f_\mathrm{top}(\sigma)+ \log\!\big(1+r(\rho\sigma)\big) ,
\end{equation}
where $\log \rho$ is real-valued, $\log f_\mathrm{top}(\sigma)$ is
any continuous choice of $\log f_\mathrm{top}$ on the (simply
connected) hypersurface $\partial C_+^p$, and $\log(1+r)$ is the
unique continuous branch which for large enough $\rho$ is given by
the principal value
$$
\log(1+r )=-\sum_{\lambda=1}^\infty
\frac{(-r)^\lambda}{\lambda}\qquad\qquad\qquad(r=r(\rho\sigma),\
\rho\gg0).
$$  In \eqref{Branch} we  used Mahler's Hypothesis  to insure
$ 1+r \not=0$ and $f_\mathrm{top}(\sigma)\not=0$.

We  now state Mahler's main result \cite{11} concerning the
meromorphic continuation  of $\zeta(s;f,g)$  and of  $Z(s;f,g)$.
\begin{theorem} \label{Mahler1}
$\mathrm{(Mahler)}$ Suppose $f$ is a polynomial satisfying
 Mahler's Hypothesis  and let $g$ be any
polynomial in the same number $p$ of variables. Then
\begin{equation}\label{Dirzeta}
Z(s;f,g):=  \int_{\R^p_{\ge0}} g(x)f(x)^{-s}\,dx
\end{equation}
and
\begin{equation}\label{Dirzeta2}
\zeta(s;f,g):=  \sum_{k_1=0}^\infty \cdots \sum_{k_p=0}^\infty
g(k_1,\dots,k_p)f(k_1,\dots,k_p)^{-s}
\end{equation}
 both converge absolutely and uniformly on compact subsets of the right half-plane $\re(s)>
(\deg(g)+p)/\deg(f)$, and extend to all of  $\,\C$ as   meromorphic
functions of $s$, regular at all non-positive integers
$s=0,\,-1,\,-2,\dots\,$. Their poles are all simple and occur among
the rational numbers of the form
$$\displaystyle{s=\frac{\deg(g)+p-\ell}{\deg(f)}},$$ with $\ell\ge0$
an integer. Moreover, $Z(s;f,g )$ and $\zeta(s;f,g )$ are analytic
in $s,$ $f$ and $g$, as long as $s$ stays outside the above  set of
possible poles and $f$ stays in an open simply connected subset of
$\mathcal{M}$.
\end{theorem}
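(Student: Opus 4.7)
The plan is to reduce both $Z(s;f,g)$ and $\zeta(s;f,g)$ to manipulations in the cubical coordinates $(\rho,\sigma)$ of \eqref{Cubical}, using the factorization \eqref{Factorf} and the continuous branch of $\log f$ specified in \eqref{Branch}. Joint analyticity in the coefficients of $f$ and $g$ comes essentially for free, since every step below depends continuously on those coefficients as $f$ varies in a small open connected neighborhood inside $\mathcal{M}$.

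First, for absolute convergence on $\re(s)>(\deg(g)+p)/m$: Mahler's Hypothesis gives $|f_\mathrm{top}(\sigma)|\ge c>0$ on the compact hypersurface $\partial C_+^p$, and the formula \eqref{Writer} shows $|r(\rho\sigma)|\le\tfrac12$ once $\rho\ge\rho_f$. Hence $|f(\rho\sigma)|\asymp\rho^m$, and combining with $|g(\rho\sigma)|\le C(1+\rho)^{\deg(g)}$ and the Jacobian $dx=\rho^{p-1}\,d\rho\,d\sigma$, the integral $Z(s;f,g)$ is dominated by $\int_{\partial C_+^p}\!\int_{\rho_f}^{\infty}\rho^{\deg(g)+p-1-m\re(s)}\,d\rho\,d\sigma$, which converges (uniformly on compacta in $s$) in the stated half-plane. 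A shell-by-shell summation over lattice points $k\in\N^p$ with $\rho(k)=q$ yields the analogous bound for $\zeta(s;f,g)$.

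For the meromorphic continuation of $Z$ I would split $Z=Z_{\rho\le\rho_f}+Z_{\rho\ge\rho_f}$; the first is trivially entire in $s$. In the second, write $f^{-s}=\rho^{-ms}f_\mathrm{top}(\sigma)^{-s}(1+r(\rho\sigma))^{-s}$ and expand
\[
(1+r)^{-s}=\sum_{\lambda\ge 0}\binom{-s}{\lambda}r(\rho\sigma)^\lambda=\sum_{n\ge 0} A_n(s,\sigma)\,\rho^{-n},
\]
where, since $r$ is by \eqref{Writer} a polynomial in $1/\rho$ of degree $m$ with no constant term, each $A_n(s,\sigma)$ is a polynomial in $s$ of degree $\le n$. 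Multiplying by $g(\rho\sigma)=\sum_{j=0}^{\deg(g)}g_{(j)}(\sigma)\rho^j$ and integrating against $\rho^{p-1}\,d\rho$ term by term yields
\[
Z_{\rho\ge\rho_f}(s;f,g)=\sum_{j,n}\frac{\rho_f^{\,j+p-ms-n}}{ms+n-j-p}\int_{\partial C_+^p}f_\mathrm{top}(\sigma)^{-s}g_{(j)}(\sigma)A_n(s,\sigma)\,d\sigma.
\]
Extracting the first $K$ terms produces a meromorphic function with at worst simple poles at $s=(j+p-n)/m$ (a subset of the list in the statement), while the remaining tail is absolutely convergent, hence analytic, on the half-plane $\re(s)>(\deg(g)+p-K-1)/m$; letting $K\to\infty$ extends $Z$ meromorphically to all of $\C$. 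For $\zeta$, I would transcribe the same argument with integrals replaced by lattice-shell sums, using the well-known meromorphic continuation of the one-variable zeta-like sum $\sum_{q\ge q_0} q^{-ms+j-n}$ in place of the $\rho$-integrals (alternatively one can iterate the Euler--MacLaurin formula to reduce $\zeta-Z$ to lower-dimensional terms); the pole structure is identical.

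The only real obstacle is regularity at the non-positive integers $s=-N$. A potential pole at $s=-N$ corresponds to an index $n=j+p+mN$ in the double sum above, with residue a nonzero multiple of $\int_{\partial C_+^p}f_\mathrm{top}(\sigma)^{N}g_{(j)}(\sigma)A_{j+p+mN}(-N,\sigma)\,d\sigma$. The key observation is that at $s=-N$ we have $\binom{-s}{\lambda}=\binom{N}{\lambda}=0$ for $\lambda>N$, so the binomial series collapses to the honest polynomial $(1+r)^N$, whose expansion in $1/\rho$ contains only powers $\rho^{-n}$ with $0\le n\le mN$. Since $j+p+mN>mN$ for every $j\ge 0$ and $p\ge 1$, the coefficient $A_{j+p+mN}(-N,\sigma)$ is identically zero, the residue vanishes, and $s=-N$ is a removable singularity of both $Z$ and $\zeta$. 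This combinatorial collapse (the binomial series at a non-positive integer becoming a finite polynomial of low enough degree) is the heart of the argument and is what I would expect to be the most delicate point to write up.
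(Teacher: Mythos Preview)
Your treatment of $Z(s;f,g)$ is essentially the paper's proof, with one technical difference: the paper controls the tail by a \emph{finite} Taylor expansion with integral remainder,
\[
(1+r)^{-s}=\sum_{\lambda=0}^{k-1}\binom{-s}{\lambda}r^\lambda
+\,k\binom{-s}{k}r^k\!\int_0^1\frac{(1-t)^{k-1}}{(1+tr)^{s+k}}\,dt,
\]
so that the remainder $N_k(s,w)$ is an explicit absolutely convergent integral on $\re(s)>-N-\tfrac1m$. Your truncation of the full series requires instead a bound $|A_n(s,\sigma)|\le M\,\eta^{\,n}$ (say via a Cauchy estimate on $(1+r)^{-s}$ as a function of $1/\rho$) to justify that the tail is analytic on the enlarged half-plane; this is doable, but you should say so rather than assert it. Your regularity argument at $s=-N$ is exactly the paper's observation (for $\lambda>N$ the binomial coefficient kills the term; for $\lambda\le N$ the relevant $\rho$-power never reaches $\rho^{-p-mN}$), just reorganized in terms of the $A_n$; it is clean and correct.

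Your sketch for $\zeta(s;f,g)$ has a genuine gap. In the lattice-shell decomposition $k=q\sigma$ with $q=\rho(k)\in\N_0$, the inner sum ranges over the discrete set $S_q=\{k/q:\rho(k)=q\}\subset\partial C_+^p$, which depends on $q$. After expanding in powers of $q$ you obtain
\[
\sum_{\rho(k)=q}g(k)f(k)^{-s}=\sum_{j,n}q^{\,j-ms-n}\sum_{\sigma\in S_q}g_{(j)}(\sigma)\,f_\mathrm{top}(\sigma)^{-s}A_n(s,\sigma),
\]
and the inner sum is \emph{not} independent of $q$ (nor polynomial in $q$, since $f_\mathrm{top}(\sigma)^{-s}$ is transcendental in $\sigma$). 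Hence the $q$-sum does not reduce to a single Hurwitz-type series $\sum_q q^{-ms+j-n}$ as you suggest. The alternative you mention---iterating Euler--MacLaurin to express $\zeta-Z$ as a finite combination of lower-dimensional zeta integrals---is precisely what Mahler does and what the paper invokes; that is the route to take, and once taken the pole locations and the regularity at $s=-N$ for $\zeta$ follow from the corresponding facts for $Z$ (note that restricting $f$ to a coordinate hyperplane preserves both Mahler's Hypothesis and $\deg f$).
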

\noindent Mahler does not explicitly address the analytic dependence
on (the coefficients of) $f$ and $g$, but it is immediate from his
proof. In \S3 the reader will find a full proof of Mahler's theorem
for $Z(s;f,g)$. As Mahler showed, the analytic continuation for
$\zeta(s;f,g)$ follows readily from that of $Z(s;f,g)$ and the
Euler-MacLaurin formula.

 We will need to go into Mahler's
proof to simplify his formulas for $Z(s;f,g)$ at special values.
Mahler actually dealt with the slightly different sums
$$
\sum_{k_1=1}^\infty \cdots \sum_{k_p=1}^\infty
g(k_1,\dots,k_p)f(k_1,\dots,k_p)^{-s}\qquad\qquad (\re(s)\gg0),
$$
\ie Mahler summed over $k_i\ge1$, whereas we use $k_i\ge0$ in
\eqref{Dirzeta2}. Our series can be written as a finite sum of
Mahler's, and so his theorem gives the meromorphic continuation of
\eqref{Dirzeta2}. The only point worth noting here is that
polynomials in fewer variables, obtained from $f$ (which we assume
 satisfies Mahler's Hypothesis) by inserting 0 for some $x_i$'s
 again satisfy Mahler's Hypothesis (in fewer variables) and have the same
degree as $f$.

We can now prove
\begin{Proposition}\label{RaabeMainP}
  Suppose  $ f$ and $g$ are polynomials in $p$ variables, and assume
$f$ satisfies Mahler's Hypothesis. Then,
\begin{enumerate}
\vskip.5cm
\item (Raabe formula) For $s$ outside the  possible pole set of $Z(s;f,g)$
 given in Mahler's Theorem above, we have
\begin{equation}\label{RaabeProp}
 \qquad\qquad Z(s;f,g)=\int_{t\in[0,1]^p}\zeta(s;f_t,g_t)\,dt,
\end{equation}
 where $f_t(x):=f(t+x)$ and $dt$ is Lebesgue measure on   $\R^p$.
 \vskip.3cm
\item For a fixed  integer $N\ge0$,
 the  maps  $a\to \zeta(-N;f_a,g_a)$ and $a\to  Z(-N;f_a,g_a)$
are    polynomials  in  $a=(a_1,\ldots,a_p) \in\R_{\ge0}^p$  of
degree at most $N\deg(f)+\deg(g)+p$.
 \vskip.3cm
 \item   If we write out the polynomial $Z(-N;f_a,g_a)$ as a sum of monomials,
\begin{equation}\nonumber
\qquad  Z(-N;f_a,g_a)=\sum_{L} c_L a^L \qquad
\Big(a^L:=\prod_{i=1}^p a_i^{L_i},\ \ c_L=c_L(N;f,g)\in\C\Big),
\end{equation}
 then
\begin{equation}\label{Bern}
 \zeta(-N;f,g)=\sum_{L} c_L B_L, \qquad\qquad\qquad\qquad\qquad\qquad\qquad\quad\
\end{equation}
where  $B_L:=\prod_{i=1}^p B_{L_i}$ is a product of  Bernoulli
numbers. More generally, for $a\in \R_{\ge0}^p$ we have
$$
\qquad\qquad \zeta(-N;f_a,g_a)=\sum_{L} c_L B_L(a),
$$
where $B_L(a)=\prod_{i=1}^p  B_{L_i}(a_i)$ is a product of Bernoulli
polynomials.
\end{enumerate}
\end{Proposition}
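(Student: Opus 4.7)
My plan is to prove the three parts in the natural order, with part (2) as the technical heart and parts (1) and (3) essentially formal consequences.

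For part (1), I would first work in the half-plane $\re(s)>(\deg(g)+p)/\deg(f)$, where Mahler's theorem guarantees absolute convergence of $\zeta(s;f_t,g_t)$ uniformly in $t\in[0,1]^p$. The sum over $k\in\Z_{\ge 0}^p$ and the integral over $t$ may then be swapped by Fubini--Tonelli:
\[
\int_{[0,1]^p}\zeta(s;f_t,g_t)\,dt = \sum_{k\in\Z_{\ge 0}^p}\int_{[0,1]^p} g(k+t)f(k+t)^{-s}\,dt = \sum_{k}\int_{k+[0,1]^p} g(x)f(x)^{-s}\,dx,
\]
which equals $Z(s;f,g)$ because the translates $\{k+[0,1]^p\}_{k\in\Z_{\ge 0}^p}$ tile $\R_{\ge 0}^p$. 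Substituting $(f_a,g_a)$ for $(f,g)$ and using $(f_a)_t=f_{a+t}$ gives \eqref{RaabeProp} for $\re(s)$ large, and analytic continuation in $s$ (Mahler's theorem) extends it to all non-polar $s$.

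For part (2), the goal is to show both maps are polynomials in $a$ of degree at most $N\deg(f)+\deg(g)+p$. For $a\in\R_{\ge 0}^p$, the substitution $y=x+a$ yields $Z(s;f_a,g_a)=\int_{a+\R_{\ge 0}^p} g(y)f(y)^{-s}\,dy$, and inclusion--exclusion via $\mathbf{1}_{[a_i,\infty)}=\mathbf{1}_{[0,\infty)}-\mathbf{1}_{[0,a_i)}$ splits this into $2^p$ integrals $I_S(s;a)$, indexed by $S\subseteq\{1,\ldots,p\}$, where the coordinates $y_i$ with $i\in S$ are integrated over $[0,a_i)$ and the rest over $[0,\infty)$. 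At $s=-N$ the integrand $gf^N$ is a polynomial in $y$, so the full-box piece $S=\{1,\ldots,p\}$ is visibly polynomial in $a$ of degree at most $N\deg(f)+\deg(g)+p$. For $|S|<p$, I would integrate the unbounded coordinates first by invoking Mahler's theorem in fewer variables (using that the relevant restrictions of $f$ still support the analytic continuation), obtaining at $s=-N$ a polynomial in the bounded coordinates, which a subsequent box integration turns into a polynomial in $a$. The analogous polynomiality of $\zeta(-N;f_a,g_a)$ then follows either from Mahler's Euler--MacLaurin reduction of the Dirichlet series to the zeta integral plus polynomial-in-$a$ boundary terms, or from part~(1) combined with the inversion lemma below.

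With (2) established, part (3) drops out formally from the Raabe formula at $s=-N$: writing $P(a):=Z(-N;f_a,g_a)=\sum_L c_L a^L$ and $Q(a):=\zeta(-N;f_a,g_a)$, both polynomials, part~(1) specializes to $P(a)=\int_{[0,1]^p} Q(a+t)\,dt$, and the Raabe-inversion Lemma~B (recalled in \S1) forces $Q(a)=\sum_L c_L B_L(a)$; setting $a=0$ recovers \eqref{Bern}. The hard part will be the degree bookkeeping in the $|S|<p$ pieces of (2): one must track polynomial dependence on the ``bounded'' parameters through Mahler's meromorphic continuation of the lower-dimensional zeta integrals and verify the precise bound $N\deg(f)+\deg(g)+p$, which requires unpacking the explicit construction of Section~3 rather than treating Mahler's theorem as a black box.
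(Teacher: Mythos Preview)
Your arguments for parts~(1) and~(3) match the paper's. The real divergence is in part~(2), where the paper takes a completely different and much shorter route: it differentiates. Lemma~A in the paper expresses
\[
\partial_L\big(g(x)f(x)^{-s}\big)=\sum_{\nu=0}^{|L|}\Big(\prod_{j=0}^{\nu-1}(s+j)\Big)P_{L,\nu}(x)\,f(x)^{-(s+\nu)},
\]
with $\deg P_{L,\nu}\le q+m\nu-|L|$. Summing over $k\in\N_0^p$ (or integrating over $\R_{\ge0}^p$) and invoking Mahler's convergence bound shows that for $|L|>Nm+q+p$ the resulting Dirichlet series (or zeta integral) converges absolutely for $\re(s)>-N-\tfrac1m$; at $s=-N$ every summand vanishes, either because $\prod_j(s+j)=0$ (for $\nu>N$) or because $P_{L,\nu}\equiv0$ (for $\nu\le N$). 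Hence all high-order $a$-derivatives of $\zeta(-N;f_a,g_a)$ and $Z(-N;f_a,g_a)$ vanish, giving polynomiality and the exact degree bound in one stroke. This argument treats $\zeta$ and $Z$ symmetrically and never needs induction on $p$ or any lower-dimensional input.

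Your inclusion--exclusion approach is not wrong in spirit, but as written it has a gap you have not closed. For the mixed pieces $I_S$ with $0<|S|<p$, you fix the bounded coordinates $y_i\ (i\in S)$ and assert that the inner zeta integral in $p-|S|$ variables, analytically continued to $s=-N$, is a \emph{polynomial} in those $y_i$. Mahler's theorem only gives analyticity in the coefficients of $f$, not polynomiality; extracting polynomial dependence from the explicit continuation of \S3 is essentially re-proving part~(2) in fewer variables, so at best you are setting up an induction on $p$ that you have not carried out. More seriously, your alternative route to the polynomiality of $\zeta(-N;f_a,g_a)$---``part~(1) combined with the inversion lemma''---is circular: Lemma~B is stated for two \emph{polynomials} $P,Q$ linked by a Raabe relation, and indeed the Raabe relation $P(a)=\int_{[0,1]^p}Q(a+t)\,dt$ does not force $Q$ to be polynomial (in one variable, $Q(a)=B_1(a)+\varphi(a)$ with any mean-zero $1$-periodic $\varphi$ satisfies $\int_0^1 Q(a+t)\,dt=a$). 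So you must establish that $a\mapsto\zeta(-N;f_a,g_a)$ is a polynomial before Lemma~B can be invoked; the paper does this directly via the derivative argument above.
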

\noindent We note that the Raabe formula \eqref{Raabe}
 stated in
\S1  follows from   \eqref{RaabeProp} on replacing $f$ by $f_a$,
noting that $\big(f_a\big)_t=f_{a+t}$.

\begin{proof} Write $m:=\deg(f),\ q:=\deg(g)$. For $\re(s)>\frac{p+q}{m}$,
 the integral and series defining  $Z(s;f,g)$ and $\zeta(s;f,g)$
are absolutely convergent, as is clear from \eqref{Writer} and
\eqref{Factorf}. Note also that if $t\in\R_{\ge0}^p$, then $f_t $
satisfies Mahler's Hypothesis and $f_t$ has the same degree as $f$.
Let $\N_0:=\{0,1,2,3,\dots\}$ and compute for
$\re(s)>\frac{p+q}{m}$,
\begin{align}
\int_{[0,1]^p}&  \zeta(s;f_t,g_t)\,dt= \int_{[0,1]^p}
\sum_{k\in\N_0^p} g(k+t)f(k+t)^{-s}\,dt\\ =&
 \sum_{k\in\N_0^p}\int_{k+[0,1]^p}
g(t)f(t)^{-s}\,dt\nonumber= \int_{\R^p_{\ge0}}
g(t)f(t)^{-s}\,dt\nonumber =Z(s;f,g).
\end{align}
By analytic continuation, the Raabe formula $$\int_{[0,1]^p}
\zeta(s;f_t,g_t)\,dt= Z(s;f,g) $$ holds for all $s$ outside the
possible pole set  given in Mahler's Theorem.

To prove the polynomial nature of $a\to \zeta(-N;f_a,g_a)$ we follow
the proof sketched in \cite{6}.  Mahler's Theorem implies that
$\zeta(-N;f_a,g_a)$ is an analytic function of $a$ as  $a$ ranges in
some small open ball in $\C^p$ containing the origin. We   shall
show that all sufficiently high derivatives with respect to $a$
vanish.
\begin{lemma}\label{LemmaA}
    Let $ \partial_L$ be the differential operator
      $$ \qquad\qquad\partial_L :=\frac{\partial ^{|L|}}{\partial x_1^{L_1}\cdots \partial
x_p^{L_p}} \qquad \quad\big(L=(L_1,\dots,L_p),\ \ \
|L|:=\sum_{i=1}^p L_i \,\big),
$$
and let $f$ and $g$ be polynomials in $p$ variables of degree $m$
and $q$, respectively.
    Then, for $x=(x_1,\dots,x_p)$ in an open set in $\C^p$ where some branch of $\log f(x)$ is analytic,  we have
    \begin{equation} \label{derivada}
      \partial_L \big(  g(x)f(x)^{-s}\big)
      =
      \sum_{\nu=0}^{|L|}\Big(\prod_{j=0}^{\nu-1}(s+j)\Big)P_{L,\nu}(x)\,f(x)^{-(s+\nu)},
    \end{equation}
    where  $P_{L,\nu}(x)$ is a polynomial in $x$, independent of
    $s$,      of degree at most $q+m\nu -|L|$,   vanishing if $q+m\nu -|L|<0$.
\end{lemma}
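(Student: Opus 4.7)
The plan is to prove the lemma by induction on $|L|$. The base case $|L|=0$ is trivial: the only term is $\nu=0$, with empty product equal to $1$ and $P_{0,0}(x)=g(x)$, which has degree $q = q+m\cdot 0 - 0$, as required.

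For the inductive step, I will assume the formula holds for some multi-index $L$ and apply the first-order operator $\partial_i := \partial/\partial x_i$ term by term, so as to obtain the formula for $L+e_i$ (where $e_i$ denotes the $i$-th standard basis vector). By the Leibniz rule,
$$
\partial_i\!\left[P_{L,\nu}(x)f(x)^{-(s+\nu)}\right]
= (\partial_i P_{L,\nu})(x)\,f(x)^{-(s+\nu)} - (s+\nu)\,P_{L,\nu}(x)(\partial_i f)(x)\,f(x)^{-(s+\nu+1)}.
$$
The key algebraic identity is $\prod_{j=0}^{\nu-1}(s+j)\cdot(-(s+\nu))=-\prod_{j=0}^{\nu}(s+j)$, which lets me reindex the second group of terms to match the form of the desired expansion at level $|L|+1$. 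Collecting the contributions with a common power of $f$, I will read off
$$
P_{L+e_i,\nu}(x) = (\partial_i P_{L,\nu})(x)\;-\;P_{L,\nu-1}(x)\,(\partial_i f)(x),
$$
with the convention that $P_{L,-1}=0$ and $P_{L,|L|+1}=0$ (so only $0\le\nu\le|L|+1$ survive).

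Next I will verify the degree bound. The inductive hypothesis gives $\deg(P_{L,\nu})\le q+m\nu-|L|$, hence $\deg(\partial_i P_{L,\nu})\le q+m\nu-|L|-1 = q+m\nu-|L+e_i|$. Similarly, $\deg\!\big(P_{L,\nu-1}\,\partial_i f\big)\le (q+m(\nu-1)-|L|)+(m-1)=q+m\nu-|L+e_i|$. So the desired bound is preserved.

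The one subtle point, which I expect to be the main bookkeeping obstacle, is checking that $P_{L+e_i,\nu}$ actually vanishes whenever $q+m\nu-|L+e_i|<0$. For the first summand, this case forces $q+m\nu-|L|\le 0$, so $P_{L,\nu}$ is either zero (by induction) or a constant, and in both cases $\partial_i P_{L,\nu}=0$. For the second summand, the same inequality (together with $m\ge 1$) gives $q+m(\nu-1)-|L|<0$, so $P_{L,\nu-1}=0$ by induction. This closes the induction and completes the proof.
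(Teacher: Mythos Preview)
Your proposal is correct and follows exactly the approach the paper indicates: the paper's proof reads in full, ``This is a straight-forward induction on $|L|:=\sum_{i=1}^p L_i$. We omit the routine details,'' and you have supplied precisely those details, including the recursion $P_{L+e_i,\nu}=\partial_i P_{L,\nu}-P_{L,\nu-1}\,\partial_i f$ and the degree/vanishing bookkeeping. One small remark: your appeal to ``$m\ge 1$'' is harmless here since in the paper's setting $\deg(f)>0$ by Mahler's Hypothesis, and in any case if $m=0$ then $\partial_i f=0$, so the second summand vanishes outright.
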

\begin{proof}
      This is a straight-forward induction on $|L|:=\sum_{i=1}^p L_i$.
      We omit the routine details.\end{proof}

For $a\in\R_{\ge0}^p$, Mahler's theorem implies that
$$
\zeta\big(s;f_a, (P_{L,\nu})_a\,\big)=\sum_{k\in\N_0^p}
P_{L,\nu}(k+a)\,f(k+a)^{-(s+\nu)}
$$
converges (absolutely and uniformly on compact subsets) in the right
half-plane $\re(s+\nu)>\frac{p+\deg(P_{L,\nu})}{\deg(f)}$. Taking
$|L|\ge Nm+q+p+1$ and using the bound on $\deg(P_{L,\nu})$ in the
Lemma, this is the half-plane $\re(s)>-N-\frac1m$. Hence, for such
$L$ and $a$,
$$
\frac{\partial ^{|L|}\Big( \zeta(s;f_a,g_a) \Big)}{\partial
a_1^{L_1}\cdots \partial a_p^{L_p}}
 =
 \sum_{\nu=0}^{|L|}\Big(\prod_{j=0}^{\nu-1}(s+j)\Big)
 \sum_{k\in\N_0^p} P_{L,\nu}(k+a)\,f(k+a)^{-(s+\nu)},
$$
which is initially seen to be valid for $\re(s)\gg0$, actually gives
absolutely convergent expressions for $\re(s)>-N-\frac1m$. By
analytic continuation, the above holds at $s=-N$. However, at $s=-N$
the right-hand side vanishes rather trivially. Indeed, for $\nu>N$,
the product over $j$ vanishes at $s=-N$. For $0\le \nu\le N$, the
degree of $P_{L,\nu}$ is at most $q+m\nu-|L|\le q+mN-(Nm+q+p+1)<0$,
and so $P_{L,\nu}$ vanishes identically. We conclude that all
$a$-derivatives of $ \zeta(s;f_a,g_a)$ of order greater than
$Nm+q+p$ vanish. Hence $ \zeta(s;f_a,g_a)$ is a polynomial in $a$ of
degree at most $Nm+q+p$.

 Replacing  $\sum_{k\in\N_0^p}$
by $\int_{x\in\R_{\ge0}^p}$, the  above proof   shows that $
Z(s;f_a,g_a)$ is also a polynomial in $a$ of degree at most $Nm+q+p
\ \big($alternatively, this follows from Raabe's formula
\eqref{Raabe} and the polynomial nature of $a\to
\zeta(s;f_a,g_a)\,\big)$. This completes the proof of the second
claim in Proposition \ref{RaabeMainP}.

 The third claim  follows
from the following  lemma, with $Q(a):=\zeta(s;f_a,g_a)$ and
$P(a):=\zeta(s;f_a,g_a)$.
\begin{lemma}\label{LemmaB} Let $P$ and $Q$ be two polynomials in
$p$ variables linked by
\begin{equation} \label{RaabeT}
P(a)=\int_{t\in[0,1]^p} Q(a+t)\,dt.
\end{equation}
 Write out
\begin{equation} \label{coefficients2}
P(a_1,\ldots,a_p)=\sum_{L} d_L \prod_{i=1}^p a_i^{L_i}, \ \ \ \ \ \
\end{equation}
where $d_L\in\C$ and $L=(L_1,\ldots,L_p)\in\N_0^p$ ranges over a
finite set of multi-indices. Then
\begin{equation} \label{inverseRaabe}
 Q(a_1,\ldots,a_p)= \sum_{L} d_L
\prod_{i=1}^p B_{L_i}(a_i),
\end{equation}
where the $B_{L_i}(a_i)$ are Bernoulli polynomials, defined   in
\eqref{BernoulliDef}. Conversely, if $Q$ is given by
\eqref{inverseRaabe}, then   \eqref{RaabeT} and
\eqref{coefficients2} are equivalent formulas for $P$.
    \end{lemma}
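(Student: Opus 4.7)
The plan is to establish the lemma in two stages: first verify directly that the polynomial $Q := \sum_L d_L \prod_{i=1}^p B_{L_i}(a_i)$ always satisfies \eqref{RaabeT} with $P := \sum_L d_L \prod_{i=1}^p a_i^{L_i}$, and then use a uniqueness argument to extract the forward direction.

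Everything reduces, via Fubini, to the one-variable identity
\[
\int_0^1 B_j(a+t)\,dt = a^j \qquad (j \geq 0).
\]
To prove this I would set $I_j(a) := \int_0^1 B_j(a+t)\,dt$; then $I_0 \equiv 1$, while differentiating under the integral and applying the recursion $B_j' = j B_{j-1}$ from \eqref{BernoulliDef} gives $I_j'(a) = j I_{j-1}(a)$, and the normalization $\int_0^1 B_j(t)\,dt = 0$ gives $I_j(0) = 0$ for $j \geq 1$. An induction on $j$ then forces $I_j(a) = a^j$. Granted this, Fubini immediately yields the converse direction:
\[
\int_{t \in [0,1]^p} Q(a+t)\,dt = \sum_L d_L \prod_{i=1}^p \int_0^1 B_{L_i}(a_i + t_i)\,dt_i = \sum_L d_L \prod_{i=1}^p a_i^{L_i},
\]
which matches \eqref{coefficients2}.

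The main obstacle is the uniqueness needed for the forward direction, but this dissolves by a dimension count. Consider the linear map $T$ sending a polynomial $Q(a)$ to $a \mapsto \int_{[0,1]^p} Q(a+t)\,dt$. Since $\int_0^1 (a_i+t_i)^{L_i}\,dt_i$ is a polynomial in $a_i$ of degree exactly $L_i$ with leading coefficient $1$, the map $T$ preserves both total degree and leading monomial. In particular, $T$ restricts to an injective linear endomorphism of the finite-dimensional space $V_D$ of polynomials of total degree $\le D$, hence a bijection on each $V_D$. Thus, given $P$ and $Q$ satisfying \eqref{RaabeT} with $P$ written as in \eqref{coefficients2}, bijectivity forces $Q$ to coincide with the explicit preimage constructed in the converse step, namely \eqref{inverseRaabe}.
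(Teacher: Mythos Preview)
Your proof is correct and follows essentially the same route as the paper: both arguments compute $\int_0^1 B_j(a+t)\,dt=a^j$ (the paper via the antiderivative identity $B_j(x+1)-B_j(x)=jx^{j-1}$, you via the equivalent induction on $I_j'=jI_{j-1}$ with $I_j(0)=0$), then use Fubini for the product case and a finite-dimensional bijectivity argument for uniqueness. The only cosmetic difference is that the paper phrases bijectivity as ``$R$ sends the Bernoulli basis to the monomial basis,'' whereas you argue injectivity separately from a leading-term count; both yield the same conclusion.
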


\begin{proof} Let $V=V_{m,p}$ be the finite-dimensional
complex vector space of polynomials in $p$ variables
$a=(a_1\ldots,a_p)$, with complex coefficients  and having degree at
most $m$. Note that both $\{ a^L\}_L $ and $\{ B_L(a) \}_L $ are
$\C$-bases of $V$. Here $L=(L_1,\ldots,L_p)$ ranges over all
multi-indices with $|L|:=\sum_{i=1}^p L_i\le m$ and
$a^L:=\prod_{i=1}^p a_i^{L_i}$.  That $\{ B_L(a) \}_L $ is a basis
of $V=V_{m,p}$ can be proved by induction on $m$, since $a^L-B_L(a)$
has degree strictly less than $|L|$. Let $R:V\to V$ be the
$\C$-linear map taking $Q=Q(a)\in V$ to
$$
R(Q)(a):= \int_{t\in[0,1]^p} Q(a+t)\,dt.
$$
The lemma can be restated as saying that the inverse map to $R$
exists and takes $a^L$ to $B_L(a)$. Hence, it will suffice to show
that $R\big(B_L(a)\big)=a^L$, for then $R$ is an isomorphism (it
takes one basis to another). Using  \cite[p.\ 4]{3} \cite[pp.\
66--67]{9}
$$
\frac{d}{dx} B_{j+1}(x)=(j+1) B_j(x) \qquad\text{and}\qquad
B_j(x+1)-B_j(x)=jx^{j-1},
$$
we calculate
\begin{align}
R\big(B_L(a)\big)=& \int_{t\in[0,1]^p} B_L(a+t)\,dt=
\prod_{i=1}^p\int_0^1 B_{L_i}(a_i+t_i)\,dt_i\nonumber \\
=&   \prod_{i=1}^p\frac{1}{L_i+1}
\big(B_{L_i+1}(a_i+1)-B_{L_i+1}(a_i)\big)\nonumber =  \prod_{i=1}^p
a_i^{L_i}.
\end{align}
This concludes the proof of the lemma and of Proposition
\ref{RaabeMainP}.\end{proof}\end{proof}

 Using Proposition \ref{RaabeMainP}, we now  show that  Theorem \ref{MainTheorem}
follows from the product formula for  zeta integrals
 \begin{equation}\label{IntFormula2}
 \deg(f)\cdot Z\big(0;f,g\big)=\sum_{j=1}^n
\deg(f_j)\cdot Z(0;f_j,g),
\end{equation}
which we prove  in the next section. We always assume that all the
$f_j$ are polynomials in $p$ variables and satisfy Mahler's
Hypothesis. Theorem \ref{MainTheorem} states that
\begin{equation}\label{SeriesFormula2}
 \deg(f)\cdot\zeta\big(0;f,g\big)=\sum_{j=1}^n
\deg(f_j)\cdot\zeta(0;f_j,g)\qquad\qquad\Big( f:=\prod_{j=1}^n f_j
\Big),
\end{equation}
To prove this write
\begin{equation}\label{ZZZ}Z(0;f_a,g_a)=\sum_L c_L(0;f,g) a^L,\qquad
Z(0;\big(f_j\big)_a,g_a)=\sum_L c_L(0;f_j,g) a^L.
\end{equation}
Since $\deg(f_a)=\deg(f)$, replacing  $f$ by $f_a$, $f_j$ by
$\big(f_j\big)_a$ and $g$ by $g_a$ in   \eqref{IntFormula2}
 gives
$$
\deg(f) c_L(0;f,g)=\sum_{j=1}^n \deg(f_j)c_L(0;f_j,g)
$$
for all the coefficients $c_L(0;f,g)$ and $c_L(0;f_j,g) $ appearing
in \eqref{ZZZ}. Equation \eqref{Bern} now gives
\begin{align*}
 \deg(f)\zeta(0;f,g) &= \sum_L B_L \deg(f) c_L(0;f,g) \\
&=\sum_L B_L
\sum_{j=1}^n \deg(f_j)   c_L(0;f_j,g)  \\
&= \sum_{j=1}^n \deg(f_j)\sum_L B_L   c_L(0;f_j,g)=\sum_{j=1}^n
\deg(f_j)\zeta(0;f_j,g),
\end{align*}
 as claimed.

\section{Special values of zeta integrals}
\noindent In this section we first follow Mahler's proof \cite{11}
of the meromorphic continuation of  zeta integrals $Z(s;f,g)$. We
then show that Mahler's formulas simplify when $s$ is a non-positive
integer. Finally, we consider $s=0$ and show the product formula
\eqref{IntFormula2}.

Let us prove the part of Mahler's Theorem giving the meromorphic
continuation of $Z(s;f,g)$. Recall that in \eqref{Cubical} we
introduced cubical coordinates $x=\rho \sigma$ on $\R^p_{\ge0}$. Let
$d\sigma$ denote the natural $(p-1)$-dimensional volume element on
$\partial C_+^p$. A short Jacobian calculation shows that Lebesgue
measure $dx=\rho^{p-1} \,d\rho\,d\sigma\,$.\footnote{\ This formula
coincides formally with the Jacobian for spherical coordinates. This
means that  every formula below remains valid  on replacing
$\partial C_+^p$ by the spherical piece
$$S_+^{p-1}=\big\{(x_1,\dots,x_p)\in\R^p_{\ge0}
\big|\,x_1^2+x_2^2+\cdots+x_p^2=1 \big\}$$ and letting $d\sigma$
stand for the spherical $(p-1)$-dimensional volume form. However,
cubical coordinates  will usually result in simpler integrals, since
$\partial C_+^p$ is flat.}
   We have also seen that a branch of $\log f(x)$ can be chosen, continuous for $x\in\R^p_{\ge0}$
   and analytic locally in $f$. Note
 that the imaginary part $\im\big(\log f(x)\big)$ is uniformly
bounded  for $x\in\R^p_{\ge0}$.

 For $s$ in some compact set
$K\subset\C$ we have from \eqref{Branch} the
 estimate
$$
|g(x)f(x)^{-s}|\le c_K
\rho^{q-m\re(s)}\qquad\qquad(x=\rho\sigma\not=0,\
\,x\in\R^p_{\ge0}),
$$
for some constant $c_K$ independent of $x$. Hence the zeta integral
\eqref{Dirzeta} converges  when $q-m\re(s)<-p$, \ie in the right
half-plane $\re(s)>(q+p)/m$. Moreover, for such  $s$ and polynomials
$h$ in a neighborhood of $f$, the map $(s,h,g)\to Z(s;h,g)$ is
analytic in all three variables.

To prove Mahler's theorem for $Z(s;f,g)$ it suffices to show, for
each integer $N\ge0$, that $Z(s;f,g)$ extends meromorphically to the
right half-plane $\re(s)>-N-\frac1m$, is regular at $s=-N$, and that
all of its poles in the half-plane $\re(s)>-N-\frac1m$ are at most
simple and occur among $s$ of the form $s=(q+p-\ell)/m$, with
$\ell\ge0$ an integer. First take $\re(s)>(q+p)/m$, choose any $
w>0$ (taken sufficiently large  below) and write using cubical
coordinates \eqref{Cubical},
\begin{align}
 & Z(s;f,g)=\int_{\rho=0}^\infty  \int_{\sigma\in \partial C_+^p}
\rho^{p-1}g(\rho\sigma)f(\rho\sigma)^{-s}\,d\sigma d\rho\nonumber\\
&=\int_{\rho=0}^w \int_{\sigma\in \partial C_+^p}
\rho^{p-1}g(\rho\sigma)f(\rho\sigma)^{-s}\,d\sigma d\rho\nonumber\\
\label{Z1Z2} & {\phantom{X}}+
 \int_{\rho=w}^\infty\int_{\sigma\in
\partial C_+^p} \rho^{p-1}g(\rho\sigma)f(\rho\sigma)^{-s}\,d\sigma
d\rho  =:Z_1(s,w)+Z_2(s,w).
\end{align}
The integral $Z_1(s,w)$ over the compact set $[0,w]\times \partial
C_+^p$ gives an entire function of $s$, so we turn to the
continuation of $Z_2(s,w)$.  Using the factorization \eqref{Factorf}
we find
\begin{equation}\nonumber
    Z_2(s, w) =  \IO\int_w^{\infty} \rho^{p-1-ms} f_\mathrm{top}(\sigma)^{-s}
    g(\rho\sigma) \big(1+r(\rho\sigma)\big)^{-s}\,d\rho\,d\sigma  .
 \end{equation}
 Following
Mahler we replace $\big(1+r(\rho\sigma)\big)^{-s}$ by its finite
Taylor expansion. For $k$-times continuously differentiable
$G:[0,1]\to\C $
  we have for $ k\ge1$ and $0\le y\le1 $ \cite[\S5$\cdot$41]{17}
  $$ G(y) = \sum_{\lambda=0}^{k-1} \!\frac{G^{(\lambda)}(0)}{\lambda!}
      y^\lambda + \frac{1}{(k-1)!} \int_0^y\!G^{(k)}(t)
     (y-t)^{k-1} \,dt. $$
  For  $0\le t\le 1$ and $r\in\C$ with $|r|<1$, let
$$G(t)=G_r(t) := \big(1+t r\big)^{-s},$$ where we use the principal
branch. Then
   $$ \frac{G^{(\lambda)}(0)}{\lambda!} =
   r^\lambda\frac{\prod_{j=0}^{\lambda-1}(-s-j)}{\lambda!}
= r^\lambda\binom{-s}{\lambda} . $$
  The remainder term for $y=1$ is
   $$ \frac{1}{(k-1)!} \int_0^1 G^{(k)}(t)
      (1-t)^{k-1} \,dt = k r^k \binom{-s}{k} \int_0^1
      \frac{(1-t)^{k-1}}{ (1+t r )^{s+k}}\,dt. $$
  Hence, $G(1)= (1+r  )^{-s} $ is given by
  \begin{eqnarray*}
     (1+r )^{-s} = \sum_{\lambda=0}^{k-1}
    \binom{-s}{\lambda} r ^{\lambda} + k \binom{-s}{k}
    r ^k \int_0^1 \frac{(1-t)^{k-1}}{ (1+t\,r )^{s+k}}\,dt.
  \end{eqnarray*}
For $w\ge\rho_f$   (large enough that $|r(\rho\sigma)|\le\frac12$
for $\rho\ge w$), $\re(s)>(q+p)/m$ and $N\ge0$ an integer,
$Z_2(s,w)$
 $\big($see \eqref{Z1Z2}$\big)$ can now be written
  \begin{equation}\label{Z20}
    Z_2(s, w) = k \binom{-s}{k} N_k(s, w)+\sum_{\lambda=0}^{Nm+q+p} \binom{-s}{\lambda} M_{\lambda}(s,w),
  \end{equation}
  where $k:=Nm+q+p+1$, and $N_k=N_k(s, w)$ and $M_\lambda$ are given
  by
    \begin{equation}
    N_k :=    \IO\!\int_w^{\infty}\!  f_\mathrm{top}(\sigma)^{-s} \rho^{p-1-ms} g(\rho\sigma)
     r(\rho\sigma)^k\!\!
    \int_0^1\!\frac{(1-t)^{k-1}}{\big(1+t\,r(\rho\sigma)\big)^{s+k}}
     dt d\rho d\sigma    \label{enek}
  \end{equation}
  \begin{equation}
    M_{\lambda}(s, w) :=   \IO\int_w^{\infty}  f_\mathrm{top}(\sigma)^{-s} \rho^{p-1-ms}  g(\rho\sigma)
    r(\rho\sigma)^{\lambda}\,d\rho\,d\sigma. \label{emel}
    \end{equation}

We now extend $N_k$ analytically in $s$. Since $|r(\rho\sigma)|$
decreases at least like $\rho^{-1}$ as $\rho\to\infty$, the
integrand   in \eqref{enek} decreases at least like
$$\rho^{p+q-k-m\re(s)-1}=\rho^{-m(N+\frac1m+\re(s))-1}.$$
 Hence
$N_k(s,w)$ extends to an analytic function  in the right half-plane
$\re(s)>-N-\frac1m$.

To get the meromorphic continuation of $M_\lambda$  expand
\begin{equation} \label{Expand}
    g(\rho\sigma)\,r(\rho\sigma)^{\lambda} = \rho^{q-\lambda} \sum_{h = 0}^{q+(m-1)\lambda}
    A_{\lambda,h}(\sigma) \,\rho^{-h}\quad\big(q:=\deg(g),\
    \,m:=\deg(f)\big),
  \end{equation}
  where, in view of \eqref{Writer} and Mahler's Hypothesis on $f$, the $A_{\lambda,h}$ are rational
  functions with no poles in a neighborhood of $\partial C_+^p$.
From \eqref{emel} and \eqref{Expand} we find,  for $\re(s)>(q+p)/m$,
 \begin{align}
      M_{\lambda}&(s, w) = \sum_{h=0}^{q+(m-1)\lambda} \IO\!\int_{\rho=w}^{\infty}
        \rho^{q+p-ms-\lambda-h-1} A_{\lambda,h}(\sigma)\,f_\mathrm{top}(\sigma)^{-s} \,d\rho
        \,d\sigma \nonumber \\
     &= \label{mero}\sum_{h=0}^{q+(m-1)\lambda} \frac{w^{q+p-ms-\lambda-h}}{ms+\lambda+h-q-p}
        \IO\!A_{\lambda,h}(\sigma)\,f_\mathrm{top}(\sigma)^{-s}\,d\sigma.
    \end{align}
The above expression gives a meromorphic continuation of
$M_\lambda(s,w)$ to all $s\in\C$, with at most  simple poles at
rational points of the form $s=\frac{q+p-\ell}{m}$, with
$\ell=\lambda+h$ an integer in the range $\lambda\le\ell\le
q+m\lambda$. As $\lambda\ge0$, \eqref{Z20} shows that it only
remains to prove that $Z_2(s,w)$ is regular at $s=-N$.

For $\lambda>N$ there is no pole of $Z_2(s,w)$ at $s=-N$ because of
the factor $\binom{-s}{\lambda}$ multiplying $M_\lambda$ in
\eqref{Z20}. Finally, $M_\lambda$ has no pole at $s=-N$ for
$0\le\lambda\le N$ since its left-most pole  occurs at
$s=\frac{q+p-(q+m\lambda)}{m}>-\lambda\ge -N$.

The above proof  $\big($mainly equations \eqref{Z1Z2} to
\eqref{mero}$\big)$ shows that the analytic continuation obtained
for $Z(s;h,g)$ depends analytically on the coefficients of the
polynomials involved. This concludes the proof of Mahler's Theorem
for $Z(s;f,g)$.

\newpage

 We now show that Mahler's
formulas above for $Z(s;f,g) $ simplify  at non-positive integers
$s=-N$.
\begin{theorem}\label{Values} Let $f\in\C[x]$ be a polynomial
 in $p$ variables satisfying Mahler's Hypothesis, let
 $g\in\C[x]$   be any polynomial in $p$ variables, and let $N\ge 0$ be
 a non-negative integer. Then
 the value of the analytic continuation of the
zeta integral \eqref{Dirzeta} at $s=-N$ is
 \begin{equation} \label{valorZ}
    Z(-N;f,g) =\frac1m\,  \sum_{\lambda = N+\lceil p/m \rceil}^{q+p+Nm}
      \frac{(-1)^{\lambda-N}}{\lambda - N} \frac{1}{\binom{\lambda}{N}}
      \int_{\sigma\in \partial C_+^p}\!C_{\lambda,N}(\sigma) \, f_\mathrm{top}(\sigma)^N \,d\sigma,
  \end{equation}
where $m=\deg(f),\ \, q=\deg(g), \ \,\lceil p/m \rceil$ is the
smallest integer $\ge p/m$, the integral is over $\partial C_+^p$
defined in \eqref{Hypercube}, $C_{\lambda,N}(\sigma)$ is the
coefficient of $\rho^{-p-mN}$ in the rational function
$g(\rho\sigma)r_f(\rho\sigma)^\lambda$, with $r_f$ as in
\eqref{Writer}, and $f_\mathrm{top}$ is the degree-$m$ part of $f$.
\end{theorem}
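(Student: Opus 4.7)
The plan is to take the limit $s\to -N$ in Mahler's decomposition
$$Z(s;f,g)=Z_1(s,w)+k\binom{-s}{k}N_k(s,w)+\sum_{\lambda=0}^{Nm+q+p}\binom{-s}{\lambda}M_\lambda(s,w),\qquad k:=Nm+q+p+1,$$
obtained in the proof just completed, and to identify which summands actually contribute. Two trivial simplifications come first. Since $k>N$, one has $\binom{-s}{k}\big|_{s=-N}=\binom{N}{k}=0$; combined with the holomorphy of $N_k$ at $s=-N$ established earlier, the remainder term drops out. Next, the explicit continuation \eqref{mero} shows that $M_\lambda(s,w)$ is holomorphic at $s=-N$ unless some denominator $ms+\lambda+h-q-p$ vanishes there; this requires $\lambda+h=q+p+mN$ with $h\in\{0,1,\dots,q+(m-1)\lambda\}$, which since $\lambda$ and $N$ are integers is equivalent to $N+\lceil p/m\rceil\le\lambda\le q+p+mN$. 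For such $\lambda$, $M_\lambda$ has a simple pole at $s=-N$ with $w$-independent residue $\frac{1}{m}\int_{\partial C_+^p}A_{\lambda,h^*}(\sigma)f_\mathrm{top}(\sigma)^N\,d\sigma$, where $h^*:=q+p+mN-\lambda$; by \eqref{Expand}, $A_{\lambda,h^*}(\sigma)$ is precisely the coefficient $C_{\lambda,N}(\sigma)$ of $\rho^{-p-mN}$ in $g(\rho\sigma)r_f(\rho\sigma)^\lambda$.

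For each such $\lambda$, the binomial factor $\binom{-s}{\lambda}$ has a simple zero at $s=-N$. I would isolate the factor $-s-N$ in $\binom{-s}{\lambda}=\frac{1}{\lambda!}\prod_{j=0}^{\lambda-1}(-s-j)$ and evaluate the remaining factors at $s=-N$; the routine combinatorial identity $\frac{N!(\lambda-N-1)!}{\lambda!}=\frac{1}{(\lambda-N)\binom{\lambda}{N}}$ then yields
$$\binom{-s}{\lambda}=(s+N)\,\frac{(-1)^{\lambda-N}}{(\lambda-N)\binom{\lambda}{N}}+O\!\big((s+N)^2\big).$$
Multiplying by the simple pole of $M_\lambda(s,w)$ produces exactly the $\lambda$-th summand of \eqref{valorZ}. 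For the range $N<\lambda<N+\lceil p/m\rceil$ the zero of $\binom{-s}{\lambda}$ meets a holomorphic $M_\lambda$ and the contribution vanishes; for $\lambda\le N$ one is left with the plain finite quantity $\binom{N}{\lambda}M_\lambda(-N,w)$.

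It remains to show that the residual expression
$$R(w):=Z_1(-N,w)+\sum_{\lambda=0}^{N}\binom{N}{\lambda}M_\lambda(-N,w)$$
is zero. Since $Z(-N;f,g)$ does not depend on $w$ and the cross-term contributions computed above are $w$-independent, $R(w)$ is also independent of $w$, so it suffices to compute $\lim_{w\to 0^+}R(w)$. The entire-in-$s$ piece $Z_1(-N,w)=\int_0^w\!\int_{\partial C_+^p}\rho^{p-1}g(\rho\sigma)f(\rho\sigma)^N\,d\sigma\,d\rho$ tends to $0$ by continuity of its bounded integrand, and each $M_\lambda(-N,w)$ with $\lambda\le N$ is given by \eqref{mero} as a finite sum of terms $c_h\,w^{q+p+mN-\lambda-h}$ whose exponents satisfy $q+p+mN-\lambda-h\ge p>0$ (using $\lambda+h\le q+m\lambda\le q+mN$), so these also vanish. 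Hence $R(w)\equiv 0$, which yields \eqref{valorZ}. The main obstacle I anticipate is the careful bookkeeping in the cross-term computation: verifying the combinatorial factor $\frac{(-1)^{\lambda-N}}{(\lambda-N)\binom{\lambda}{N}}$ and correctly matching Mahler's coefficient $A_{\lambda,h^*}(\sigma)$ to $C_{\lambda,N}(\sigma)$ via \eqref{Expand}, along with the index-matching showing that the set of contributing $\lambda$'s is exactly $\{N+\lceil p/m\rceil,\dots,q+p+mN\}$.
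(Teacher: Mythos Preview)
Your argument is correct and largely parallels the paper's, but you handle the ``residual'' piece
\[
R(w)=Z_1(-N,w)+\sum_{\lambda=0}^{N}\binom{N}{\lambda}M_\lambda(-N,w)
\]
differently. The paper expands $Z_1(-N,w)$ directly using the factorization $f=\rho^m f_{\mathrm{top}}(\sigma)(1+r)$ and the binomial theorem, obtaining term by term the negative of \eqref{mero} at $s=-N$; this gives an explicit algebraic cancellation. You instead observe that $R(w)$ is $w$-independent and send $w\to 0^+$, where each piece visibly vanishes by the positivity of the exponents. Your route is slicker and avoids a computation, while the paper's route exhibits the cancellation mechanism explicitly.

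One small point to tighten: Mahler's decomposition \eqref{Z20} was derived only for $w\ge\rho_f$ (so that $|r(\rho\sigma)|\le\tfrac12$), hence the identity $Z(-N;f,g)=R(w)+(\text{cross terms})$ a priori shows $R(w)$ constant only for $w\ge\rho_f$. To take $w\to 0^+$ you need to note that $Z_1(-N,w)$ and each $M_\lambda(-N,w)$ (via \eqref{mero}) are given by explicit polynomials in $w$, so $R(w)$ is a polynomial that is constant on $[\rho_f,\infty)$ and therefore constant on all of $(0,\infty)$. With that one line added, your argument is complete; the combinatorial identity for the derivative of $\binom{-s}{\lambda}$, the index range $N+\lceil p/m\rceil\le\lambda\le q+p+mN$, and the identification $A_{\lambda,h^*}=C_{\lambda,N}$ are exactly as in the paper.
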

\begin{proof} Combining \eqref{Z1Z2} and \eqref{Z20} in the proof of
Mahler's theorem, we find for $\re(s)>-N-\frac1m,\
\,\,k:=q+p+Nm+1>N$ and any large enough $w$,
\begin{equation}\nonumber
    Z(s;f,g) = Z_1(s,w)+k \binom{-s}{k} N_k(s, w)+
    \sum_{\lambda=0}^{Nm+q+p} \binom{-s}{\lambda} M_{\lambda}(s,w).
  \end{equation}
Since $N_k$ and $M_\lambda$ are analytic at $s=-N$ for $\lambda\le
N$, we find
\begin{align}\label{ZNw}
    Z(-N;f,g) = Z_1(-N,w)+&
    \sum_{\lambda=0}^{N} \binom{N}{\lambda} M_{\lambda}(-N,w)\\ &
    +\lim_{s\to-N}\sum_{\lambda=N+1}^{Nm+q+p} \binom{-s}{\lambda} M_{\lambda}(s,w).
 \nonumber \end{align}
We shall now see that $Z_1(-N,w)$ and the first sum above cancel.
Using \eqref{Factorf}, the binomial expansion and \eqref{Expand} we
have
\begin{align}
Z_1(-N,w) & = \int_{ \partial C_+^p}\int_{0}^w \rho^{p-1}
g(\rho\sigma) f(\rho\sigma)^N\, d\rho d\sigma\nonumber
 \\  &=\int_{\partial C_+^p}f_\mathrm{top}(\sigma)^N\int_{0}^w
\rho^{mN+p-1}g(\rho\sigma) \big(1+r(\rho\sigma)\big)^N \, d\rho
d\sigma\nonumber \\ &= \sum_{\lambda=0}^N \binom{N}{\lambda} \int_{
\partial C_+^p}f_\mathrm{top}(\sigma)^N\int_{0}^w
\rho^{mN+p-1}g(\rho\sigma) r(\rho\sigma)^\lambda\, d\rho
d\sigma\nonumber
 \\ &\hskip-1.3cm= \sum_{\lambda=0}^N\binom{N}{\lambda}\sum_{h=0}^{q+\lambda(m-1)}
\int_{ \partial C_+^p}f_\mathrm{top}(\sigma)^N A_{\lambda,h}
(\sigma) \int_{0}^w \rho^{mN+p-1+q-\lambda-h}\, d\rho
d\sigma\nonumber
\\  &\hskip-1.3cm=\sum_{\lambda=0}^N\binom{N}{\lambda}\sum_{h=0}^{q+\lambda(m-1)}
\frac{w^{mN+p+q-\lambda-h} }{mN+p+q-\lambda-h}\int_{ \partial C_+^p}
f_\mathrm{top}(\sigma)^N A_{\lambda,h} (\sigma) \, d\sigma.
 \nonumber
\end{align}
By \eqref{mero}, however, the above is just $ - \sum_{\lambda=0}^{N}
\binom{N}{\lambda} M_{\lambda}(-N,w)$. Note that there is no
singularity of the integrals  above at $\rho=0$ since
$$
mN+p-1+q-\lambda-h\ge
mN+p-1+q-\lambda-\big(q+\lambda(m-1)\big)=m(N-\lambda)+p-1,
$$
which is clearly non-negative for $\lambda\le N$.
 Returning to \eqref{ZNw} we now
have
 \begin{equation} \label{valorZ1}
Z(-N;f,g)=\sum_{\lambda=N+1}^{p+q+Nm}\lim_{s\to-N}\binom{-s}{\lambda}M_\lambda(s,w).
\end{equation}
We have seen from \eqref{mero} that $M_\lambda$ may have simple
poles at points $s=\frac{q+p-\ell}{m}$ with $\ell=\lambda+h$ in the
range $\lambda\le\ell\le q+m\lambda$. Setting $s=-N$ we find
$\ell=q+p+Nm$, so $h=q+p+Nm-\lambda$. As $0\le h\le q+(m-1)\lambda$
in \eqref{mero}, $M_\lambda$ can have a pole at $s=-N$ only if
$$
h=q+p+Nm-\lambda \le q+(m-1)\lambda , \qquad\qquad \text{\ie}\ \
\lambda\ge N+\frac{p}{m}.
$$ Since $\lambda$ is an integer,  $\lambda\ge  N+\lceil p/m
\rceil$, whence from \eqref{valorZ1} we have
\begin{equation} \nonumber
Z(-N;f,g)=\sum_{\lambda=N+\lceil p/m
\rceil}^{p+q+Nm}\Big(\lim_{s\to-N}\frac{\binom{-s}{\lambda}}{s+N}\Big)\cdot
\mathrm{Residue}_{s=-N}\big(M_\lambda(s,w)\big).
\end{equation}
Induction on $\lambda$ (with $\lambda\ge N+1$) shows
$$
\lim_{s\to-N}\frac{\binom{-s}{\lambda}}{s+N}=
\frac{(-1)^{\lambda-N}}{\lambda - N} \frac{1}{\binom{\lambda}{N}}.
$$
From \eqref{mero}, on the other hand, we find
$$
\mathrm{Residue}_{s=-N}\big(M_\lambda(s,w)\big)=\frac1m
        \IO\!A_{\lambda,q+p+Nm-\lambda}(\sigma)\,f_\mathrm{top}(\sigma)^{N}\,d\sigma
        .
$$
 Combining the last three equations we see
that we have proved Theorem \ref{Values}, but with $C_{\lambda,N}$
replaced by $A_{\lambda,q+p+Nm-\lambda}$. Examining the definition
of the $A_{\lambda,h}$ in \eqref{Expand}, we see that
$A_{\lambda,h}(\sigma)$ is the coefficient of $\rho^{q-\lambda-h}$
in $g(\rho\sigma)r(\rho\sigma)^\lambda$. Thus
$A_{\lambda,q+p+Nm-\lambda}$ is the coefficient of
 $\rho^{q-\lambda-(q+p+Nm-\lambda)}=\rho^{-p-Nm}$, \ie $C_{\lambda,N}$
 in \eqref{valorZ}.\end{proof}

To apply Theorem \ref{Values} to $N=0$,  recall from \eqref{Writer}
that for fixed $\sigma\in \partial C_+^p$, $ \frac{f\ \
}{f_\mathrm{top}}(\rho\sigma) =1+r(\rho\sigma)$ is an analytic
function of $1/\rho$ (considering $\rho$ now as complex with
$|\rho|\gg0$) and that $r(\rho\sigma)\to0$ as $|\rho|\to\infty$.
Thus the principal value $\log\!\big( 1+r(\rho\sigma)\big)$ is an
analytic function of $1/\rho$ for $1/\rho$ in a disc near 0. In
particular, the coefficient of any power of $1/\rho$ in the Laurent
expansion of $\log\!\big( 1+r(\rho\sigma)\big)$  is well-defined.

\begin{Coro}\label{Coro1} Let $f\in\C[x]$ be a polynomial
 in $p$ variables satisfying Mahler's Hypothesis
and let $g$ be any polynomial in $p$ variables. Then
 $\deg(f)\cdot Z(0;f,g)$ is
 the coefficient of $\rho^{-p}$ in the Laurent expansion   of
\begin{equation}\nonumber
-\int_{\sigma\in \partial C_+^p} g(\rho\sigma) \log\!\Big( \frac{f\
\ }{f_\mathrm{top} }(\rho\sigma)\Big)\,d\sigma.
\end{equation}
\end{Coro}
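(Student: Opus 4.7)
The plan is to derive this corollary from Theorem \ref{Values} specialized to $N=0$ by expanding $-\log(f/f_\mathrm{top})$ as a power series in $r_f$ and matching the $\rho^{-p}$ Laurent coefficient against the finite sum appearing there.

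The first step is to use the principal-branch expansion, valid for $|\rho|$ sufficiently large uniformly in $\sigma \in \partial C_+^p$ by the bound $|r_f(\rho\sigma)| \le \tfrac12$ established in the proof of Mahler's theorem:
$$
-\log\!\big(1+r_f(\rho\sigma)\big) = \sum_{\lambda=1}^{\infty} \frac{(-1)^\lambda}{\lambda}\, r_f(\rho\sigma)^\lambda.
$$
Multiplying by $g(\rho\sigma)$ and appealing to the finite Laurent expansion \eqref{Expand}, the coefficient of $\rho^{-p}$ in each summand $g(\rho\sigma)\,r_f(\rho\sigma)^\lambda$ is, by definition, precisely $C_{\lambda,0}(\sigma)$, the function figuring in Theorem \ref{Values} at $N=0$.

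Next I would integrate term-by-term over the compact set $\partial C_+^p$, justified by the uniform convergence of the logarithmic series for $|\rho|$ large. Reading off the $\rho^{-p}$ coefficient then yields
$$
\sum_{\lambda=1}^{\infty} \frac{(-1)^\lambda}{\lambda}\int_{\partial C_+^p} C_{\lambda,0}(\sigma)\,d\sigma.
$$
To truncate this infinite sum, observe from \eqref{Expand} that $g(\rho\sigma)\,r_f(\rho\sigma)^\lambda$ is a Laurent polynomial in $\rho$ whose exponents lie in $[-m\lambda,\,q-\lambda]$; hence $C_{\lambda,0}(\sigma) \equiv 0$ outside the range $\lceil p/m\rceil \le \lambda \le p+q$. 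The surviving terms coincide with those in Theorem \ref{Values} at $N=0$ (using $f_\mathrm{top}^0=1$ and $\binom{\lambda}{0}=1$), whose sum equals $m\,Z(0;f,g) = \deg(f)\cdot Z(0;f,g)$.

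The main obstacle is the interchange of integration with the infinite logarithmic sum and the subsequent extraction of a single Laurent coefficient from the integrated expression; both rest on the uniform convergence of $-\log(1+r_f(\rho\sigma))$ in $\sigma\in\partial C_+^p$ for $|\rho|$ beyond some $\rho_f$ depending only on $f$. Once that interchange is justified, identifying the sum with Theorem \ref{Values} reduces to the bookkeeping of the vanishing range of $C_{\lambda,0}$ just noted.
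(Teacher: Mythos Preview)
Your proof is correct and is essentially the same argument as the paper's, merely run in the opposite direction: the paper starts from the finite sum in Theorem~\ref{Values} at $N=0$, uses the vanishing of $C_{\lambda,0}$ outside $\lceil p/m\rceil\le\lambda\le p+q$ to extend to an infinite sum, and then recognizes $\sum_{\lambda\ge1}\frac{(-1)^\lambda}{\lambda}r^\lambda=-\log(1+r)$, whereas you begin with the logarithmic series and truncate. The ingredients (Theorem~\ref{Values}, the exponent range from \eqref{Expand}, and uniform convergence for large $\rho$) are identical.
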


Before we prove Corollary \ref{Coro1},  we use it to prove the
product formula \eqref{IntFormula2}, \ie
$$
\deg\!\bigg(\! \prod_{j=1}^n f_j\bigg)\cdot Z\big(0;\prod_{j=1}^n
f_j,g\big)=\sum_{j=1}^n\deg(f_j)\cdot Z(0;f_j,g),
$$
where we assume that all the polynomials $f_j$ satisfy Mahler's
Hypothesis (with the same $p$). Induction on $n$ shows that it
suffices to deal with the case $n=2$. Suppose then that $f_1$ and
$f_2$  satisfy Mahler's Hypothesis. Since $(f_1\cdot
f_2)_\mathrm{top}= \big(f_1\big)_\mathrm{top} \cdot
\big(f_2\big)_\mathrm{top}$, the product $f_1f_2$ also satisfies
Mahler's Hypothesis. From
$$
\frac{f_1f_2}{\ \ \, \,\big(f_1f_2\big)_\mathrm{top}}=\frac{f_1}{\
\,\ \big(f_1\big)_\mathrm{top}}\cdot\frac{f_2}{\ \ \
\big(f_2\big)_\mathrm{top}}\,,
$$
and Corollary \ref{Coro1} we have (letting
$\mathrm{Coeff}_{\rho^{-p}}$ stand for the coefficient of
$\rho^{-p}$)
\begin{align}
\deg&(f_1 f_2)Z(0;f_1 f_2,g)=
-\mathrm{Coeff}_{\rho^{-p}}  \int_{\sigma\in \partial
C_+^p}\! g(\rho\sigma)
\log\!\Big(\!\frac{f_1 f_2}{ \, \,\big(f_1 f_2\big)_\mathrm{top}}(\rho\sigma)\Big)\,d\sigma \nonumber \\
=& -\mathrm{Coeff}_{\rho^{-p}}  \int_{\sigma\in \partial
C_+^p}\! g(\rho\sigma) \bigg(\!\!\log\!\Big(\!\! \frac{f_1}{ \,
\big(f_1\big)_\mathrm{top}}(\rho\sigma)\!\Big)+\log\!\Big(\!
\frac{f_2}{
 \big(f_2\big)_\mathrm{top}}
(\rho\sigma)\Big)\bigg)\,d\sigma 
\nonumber \\
=& \deg(f_1)Z(0;f_1,g)+\deg(f_2)Z(0;f_2,g).\nonumber
 \end{align}
Note that it is legitimate to use
$$
\log\!\Big(\frac{f_1 \ }{\big(f_1\big)_\mathrm{top}}\cdot\frac{f_2
\, }{\big(f_2\big)_\mathrm{top}}\Big)=\log\!\Big(\frac{f_1 \
}{\big(f_1\big)_\mathrm{top}}\Big)+\log\!\Big(\frac{f_2 \
}{\big(f_2\big)_\mathrm{top}}\Big)
$$  in the second equation above since we only
take (principal value) logarithms of complex numbers near 1 as
$\rho\to\infty$.

\vskip.3cm \noindent %\begin{proof} (of Corollary \ref{Coro1}).
{\bf{Proof}} (of Corollary \ref{Coro1}). Applying Theorem
\ref{Values} at $s=0$ we find
 \begin{equation} \label{Valor0}
   \deg(f) Z(0;f,g) = \int_{\sigma\in \partial C_+^p} \mathrm{Coeff}_{\rho^{-p}}
   \bigg(   g(\rho\sigma)\sum_{\lambda = \lceil p/m \rceil}^{q+p}
      \frac{(-1)^{\lambda}}{\lambda} r(\rho\sigma)^\lambda
        \bigg)\,d\sigma.
  \end{equation}
  The above sum would be a logarithm  if only we could extend
  the sum to all  $\lambda\ge1$. This is possible since
 \begin{equation} \label{Coeff0}
\mathrm{Coeff}_{\rho^{-p}}\left(g(\rho\sigma)r(\rho\sigma)^\lambda\right)=0
\end{equation}
  for $1\le\lambda<p/m$ and also for $\lambda>q+p$. Indeed,
   the powers   $\rho^t$ appearing in
 the $\rho$-expansion of $g(\rho\sigma)r(\rho\sigma)^\lambda$
 in \eqref{Expand} are all in the range
 $-m\lambda\le t\le q-\lambda$. For $t=-p$, this amounts to
  $\frac{p}{m}\le \lambda\le p+ q$. Thus,
 outside this range there is no coefficient of $\rho^{-p}$ in
 $g(\rho\sigma)r(\rho\sigma)^\lambda$.

   Since $
  |r(\rho\sigma)|<\frac12$ for large enough $\rho$, we
  obtain a convergent series on letting  $\lambda$
   in \eqref{Valor0}
   range over all $\N$. From \eqref{Valor0}, \eqref{Coeff0} and \eqref{Factorf} we get
  \begin{align} \nonumber
   \deg(f) Z(0;f,g)& = \int_{\sigma\in \partial C_+^p}
    \mathrm{Coeff}_{\rho^{-p}}\bigg( g(\rho\sigma)  \sum_{\lambda =
   1}^\infty
      \frac{(-1)^{\lambda}}{\lambda} r(\rho\sigma)^\lambda
        \bigg)\,d\sigma\nonumber\\ &=
         -\mathrm{Coeff}_{\rho^{-p}}\bigg(\int_{\sigma\in
        \partial C_+^p} g(\rho\sigma)
        \log
        \!\big(1+r(\rho\sigma)\big)
       \,d\sigma \bigg)\nonumber\\ &=
       -\mathrm{Coeff}_{\rho^{-p}}\bigg(\int_{\sigma\in \partial C_+^p}
        g(\rho\sigma)\log
        \!\Big(\frac{f}{f_\mathrm{top}} (\rho\sigma)\Big)
        \,d\sigma\bigg), \nonumber
  \end{align}
concluding the proof.

\end{document}